\newtheorem{theorem}{Theorem}[section]
\newtheorem{conjecture}{Conjecture}[section]
\newtheorem{corollary}{Corollary}[section]
\newtheorem{definition}{Definition}[section]
\newtheorem{lemma}[theorem]{Lemma}
\theoremstyle{remark}
\newtheorem*{remark}{Remark}
\newtheorem*{example}{Example}
\title{A note on Semi-Infinite Non-Commutative Hodge Theory and LG-Models}
\author{EMILE BOUAZIZ}
\begin{document}\begin{abstract} We study some semi-infinite invariants associated to Landau-Ginzburg models. These specialize classically to the usual twisted de Rham package and in the case of vanishing potential to the chiral de Rham complex of Malikov, Schechtman and Vaintrob, \cite{MSV}. Further we offer some small evidence that these semi-infinite invariants can be associated to dg- categories more generally, so that in the classical limit they reproduce the usual non-commutative Hodge theory associated to the category. \end{abstract}
\maketitle
\section{acknowledgements} I've benefited from numerous useful conversations whilst this work was done. I'd like in particular to thank Vassili Gorbounov, Ian Grojnowski, Owen Gwilliam and Nitu Kitchloo. Further I'd like to thank MPIM Bonn and all the staff there for providing a wonderful working environment.
\section{introduction} In the paper \cite{MSV}, the authors introduce a novel infinite dimensional extension of the complex of differential forms associated to a smooth $\mathbb{C}$-variety $X$. They call it the \emph{Chiral de Rham Complex}, and denote it $\Omega^{ch}_{X,dR}$. This complex has the structure of a differential graded vertex algebra with differential $d^{ch}_{dR}$. It is endowed with an additional \emph{conformal} grading by non-negative integers, such that the weight $0$ subspace agrees with the classical de Rham complex. This sheaf of vertex algebras can be viewed as a non-linear analogue of the $bc$-$\beta\gamma$ system, familiar to physicists. The authors construct $\Omega^{ch}_{X,dR}$ via an etale gluing procedure after writing it down for $\mathbb{A}^{d}$, in which case it is simply a tensor product of $bc$-$\beta\gamma$ systems. NB that an alternative construction in terms of $D$-modules on the algebraic loop space of $X$ was given by Kapranov and Vasserot in \cite{KV}. \\ \\ As well as $\Omega^{ch}_{X,dR}$, one may define analogues of the sheaf of polyvectors and the Hodge complex (the de Rham complex with deleted differential) on $X$. We denote these respectively by $\Theta^{ch}_{X}$ and $\Omega^{ch}_{X}$. $\Theta^{ch}_{X}$ is a  differential graded vertex algebra (dgVA henceforth) and it acts on $\Omega^{ch}_{X}$. Further this module admits an extra differential coming from $d^{ch}_{dR}$. This is simply to say that there is an enhancement of the usual calculus associated to $X$ to a semi-infinite version (we use this term only suggestively) thereof. Assuming for simplicity that $X$ is affine with $\mathcal{O}(X)$=$A$, the action of polyvectors on differential forms can be obtained from the Hochschild (co)homology package associated to $A$, as can the de Rham differential. This is simply the famous HKR isomorphism. Crucially, this package exists for arbitrary associative (differential graded) $\mathbb{C}$-algebras. According to Kontsevich (\cite{Ko}) we should view it as a non-commutative version of Hodge theory. It is the goal of this note to provide some simple examples in which this nc-Hodge package can be enhanced to a semi-infinite version of such. \\ \\ If $(X,f)$ is a Landau-Ginzburg model, then there is attached a $\mathbb{Z}/2$-graded dg-category of matrix factorizations, $\mathbf{MF}(X,f)$. The nc-Hodge package associated to this is by now well known to correspond to twisting the usual complexes of forms or polyvectors by the action of the element $df$. As such, and owing to beautiful work of Sabbah (\cite{Sa})), it is closely related to vanishing cycles cohomology of the pair $(X,f)$. We'll see that in this case a chiral enhancement exists, and moreover that we can say a fair deal about it. In particular we'll describe the representation theory of the vertex algebra we construct as well as proving a finiteness result allowing us to construct an enhancement of the vanishing cycles euler characteristic, $\chi_{van}(f)\in\mathbb{Z}$, to a refined version $\chi^{ch}_{van}(f)\in\mathbb{Z}[[q]].$

 \section{Recollections} \subsection{Basics of vertex algebra theory}We include here some background on the theory of vertex algebras, largely in order to fix some notation. Our main references throughout will be the books of Kac, \cite{Kac}, and of Ben-Zvi and Frenkel, \cite{BZF}. \begin{definition} A differential graded vertex algebra or dgVA is a tuple $(V,\partial_{V},T,\Omega,Y)$ where: \begin{itemize} \item $V$ a dg- vector space, with cohomological differential $\partial_{V}$.  \item $T$ an endomorphism of $V$, referred to as the \emph{infinitesimal translation}. Note that $T$ is assumed an endomorphism of $V$ considered as a dg- vector space, so that it is of cohomological degree $0$ and $[T,\partial_{V}]=0$.\item $\Omega$ a cycle of cohomological degree $0$, referred to as the \emph{vacuum vector} of $V$. \item $Y:V\otimes V\longrightarrow V((z))$ a multiplication map. We will package this as associating to any $a\in V$ a \emph{field} $a(z)\in End(V)[[z^{-1},z]]$.  We can think of this as a family of endomorphisms, $a_{(n)}$, so that $a(z)=\sum_{n}a_{(n)}z^{n}$. The reader is cautioned that this differs from standard indexing conventions.\end{itemize} These data are further assumed to satisfy the following conditions; \begin{itemize} \item The field associated to the vacuum vector is the identity. \item $T$ kills the vacuum vector, i.e. $T(\Omega)=0$. \item For all $v\in V$, $v(z)\Omega = v+\mathcal{O}(z)$.\item For all vectors $a$ we have $[T,a(z)]=\partial_{z}a(z)$. \item For all $a,b\in V$, the fields $a(z)$ and $b(w)$ are mutually \emph{local}. That is to say for $N>>0$ we have $(z-w)^{N}[a(z),b(w)]=0$.\item The differential $\partial_{V}$ acts as a \emph{derivation} of $V$ in the sense that we have $[\partial_{V},a(z)]=(\partial_{V}a)(z)$ for all vectors $a$. \end{itemize}  \end{definition} The vertex algebras with which we will deal in this note will all be equipped with an additional grading by non-negative integers. We will refer to such gradings as \emph{conformal}. \begin{definition} Let $V$ be a vertex algebra. A grading $V=\bigoplus_{q\geq 0}V^{(q)}$ is said to be a \emph{conformal} grading if the following conditions are satisfied;\begin{itemize}\item The vacuum vector $\Omega$ is of degree $0$. \item The infinitesimal translation $T$ is of degree $1$. \item If $a\in V^{(q)}$ then the endomorphism $a_{(n)}$ is of degree $q+n$.\end{itemize}\end{definition} We will not include the definition of a module for a dgVA here, beyond to say that it is a dg- vector space $M$ equipped with an action map $V\otimes M\longrightarrow M((z))$ satisfying conditions guaranteeing for example that $V$ is naturally a module over itself. There is also a notion of a conformally graded module for a conformally graded vertex algebra. We refer the reader to \cite{BZF} for further details.

 \subsection{Examples of Vertex Algebras}

 We now sketch some basic examples of dgVAs. We let $\mathcal{H}$ be the infinite dimensional Heisenberg algebra generated by elements $\{x_{i},y_{j}\}_{i,j\in\mathbb{Z}}$ with commutation relations $$[y_{i},x_{j}]=\delta_{i+j,0}.$$ There is an abelian Lie subalgebra $\mathcal{H}^{+}$ generated by elements $\{x_{<0},y_{\leq 0}\}$. Let us denote by $V$ the induction of the trivial $\mathcal{H}^{+}$-module to all of $\mathcal{H}$. It is well known that this induced module obtains the structure of a vertex algebra. Note that the underlying vector space of this vertex algebra is isomorphic to $$sym_{\mathbb{C}}\{x_{\geq0},y_{> 0}\}=\mathbb{C}\big[x_{i},y_{j+1}\big]_{i,j\geq0}.$$ We refer the reader to \cite{BZF} for an elucidation of the fields. There is an odd version of this. Consider the infinite dimensional Clifford algebra $Cl$ generated by elements $\{\phi_{i}\}_{i\in\mathbb{Z}}$ of cohomological degree +1 and elements $\{\psi_{i}\}_{i\in\mathbb{Z}}$ of degree -1 with the (super-) commutation relations $$[\psi_{i},\phi_{j}]=\delta_{i+j,0}$$ As in the even case there is an abelian sub-algebra generated by $\{\phi_{<0},\psi_{\leq 0}\}$ and we can induce the trivial module for this to all of $Cl$. We denote the resulting module $\bigwedge$ and recall that it also admits the structure of a dgVA. We further define $V_{d}:=V^{\otimes d}$ and $\bigwedge_{d}:=\bigwedge^{\otimes d}$. Finally we define $\bigwedge^{!}$ in a manner analogous to $\bigwedge$, except we induce from the subalgebra  $\{\phi_{\leq0},\psi_{<0}\}$
 \begin{definition} \begin{itemize}\item We define the dgVA $\Omega^{ch}_{\mathbb{A}^{d}}$ to be the tensor product algebra $V_{d}\bigotimes\bigwedge_{d}$. This is referred to as the sheaf of chiral differential forms on the affine $d$-space $\mathbb{A}^{d}$. \item We define the dgVA $\Theta^{ch}_{\mathbb{A}^{d}}$ to be the tensor product algebra $V_{d}\bigotimes\bigwedge^{!}_{d}$. This is referred to as the sheaf of polyvector fields on the affine $d$-space $\mathbb{A}^{d}$.\item The chiral de Rham differential, $d^{ch}_{dR,\mathbb{A}^{d}}$ is the derivation of  $\Omega^{ch}_{\mathbb{A}^{d}}$ defined in operatorial terms as $\sum_{i\in\mathbb{Z}}\sum_{j=1,...,d}y^{j}_{i}\phi^{j}_{-i}.$ This can be checked to be a derivation of the vertex algebra (\cite{MSV}). The resulting dgVA is denoted $\Omega^{ch}_{dR,\mathbb{A}^{d}}.$  \end{itemize}\end{definition} The following theorem underlies the construction of the chiral de Rham complex associated to an arbitrary smooth variety $X$. Before stating it we note that the above three vertex algebras can be completed $(x^{1}_{0},...,x^{d}_{0})$-adically, obtaining  $\widehat{\Omega}^{ch}_{\mathbb{A}^{d}}$ etc. We think of these as vertex algebras of the formal $d$-disc $\Delta_{d}$. We let  $G_{d}$ be the pro-unipotent group of automorphisms of this formal scheme. The following theorem is proven in \cite{MSV}. \begin{theorem} The action of $G_{d}$ extends naturally to an action on the vertex algebras  $\widehat{\Omega}^{ch}_{\mathbb{A}^{d}}$,  $\widehat{\Theta}^{ch}_{\mathbb{A}^{d}}$ and  $\widehat{\Omega}^{ch}_{dR,\mathbb{A}^{d}}$.\end{theorem}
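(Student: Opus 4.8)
The plan is to reduce the statement to the level of Lie algebras and then integrate. Write $L_{d}$ for the Lie algebra of $G_{d}$; concretely $L_{d}$ consists of formal vector fields $\tau=\sum_{j=1}^{d}\tau^{j}(x)\,\partial_{x^{j}}$ on $\Delta_{d}$ whose coefficients $\tau^{j}$ lie in the square of the maximal ideal, so that the induced derivation of $\mathbb{C}[[x^{1}_{0},\dots,x^{d}_{0}]]$ strictly raises the adic filtration. Because $G_{d}$ is pro-unipotent, $L_{d}$ is pro-nilpotent, and any action of $L_{d}$ by operators that are topologically nilpotent for this filtration will exponentiate to a genuine action of $G_{d}$. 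This is precisely what makes the passage from infinitesimal to global automorphisms unobstructed, and it is the structural reason the theorem is stated for the pro-unipotent group rather than the full automorphism group.

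First I would write down, for each $\tau\in L_{d}$, a candidate current $J_{\tau}(z)$ and take $\rho(\tau)$ to be its zero mode. On $\Omega^{ch}_{\mathbb{A}^{d}}=V_{d}\otimes\bigwedge_{d}$ the natural guess is
$$J_{\tau}(z)=\sum_{j}:\tau^{j}(x(z))\,y^{j}(z):+\sum_{i,j}:\big(\partial_{x^{j}}\tau^{i}\big)(x(z))\,\psi^{i}(z)\,\phi^{j}(z):,$$
where the first term implements the derivation $\tau$ on the bosonic coordinate fields and the second implements its Jacobian on the fermions $\phi^{j}$ (playing the role of $dx^{j}$), the precise index contraction being the one reproducing the Lie derivative on forms. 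Note that since $\tau^{j}$ is a power series, the normally ordered products only converge after $(x^{1}_{0},\dots,x^{d}_{0})$-adic completion, which is exactly why the theorem is phrased for $\widehat{\Omega}^{ch}_{\mathbb{A}^{d}}$ and its cousins. The analogous formula, with $\bigwedge_{d}$ replaced by $\bigwedge^{!}_{d}$, produces the operator on $\widehat{\Theta}^{ch}_{\mathbb{A}^{d}}$. I would then verify three things by explicit operator product expansion, using Wick's theorem and the elementary contractions $[y^{a}_{i},x^{b}_{j}]=\delta^{ab}\delta_{i+j,0}$ and $[\psi^{a}_{i},\phi^{b}_{j}]=\delta^{ab}\delta_{i+j,0}$ recorded in the definitions of $\mathcal{H}$ and $Cl$: that $\rho(\tau)$ is a derivation of all the $n$-th products, i.e. $[\rho(\tau),a(z)]=(\rho(\tau)a)(z)$; that $\tau\mapsto\rho(\tau)$ is a homomorphism, $[\rho(\tau),\rho(\sigma)]=\rho([\tau,\sigma])$; and, for the de Rham statement, that $\rho(\tau)$ commutes with the differential $d^{ch}_{dR,\mathbb{A}^{d}}=\sum_{i,j}y^{j}_{i}\phi^{j}_{-i}$, reflecting the naturality of the de Rham differential under change of coordinates.

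I expect the main obstacle to be the homomorphism property, specifically the vanishing of the potential anomaly. Computing $[\rho(\tau),\rho(\sigma)]$ by contractions produces, besides the expected term $\rho([\tau,\sigma])$, correction terms arising from double contractions and from the reordering of normal orderings; these are precisely the terms that would otherwise realise a central extension and obstruct the action. The point is that the bosonic contribution of the $\beta\gamma$-system $V_{d}$ and the fermionic contribution of the $bc$-system $\bigwedge_{d}$ enter with opposite signs and cancel exactly, so that $\rho$ is an honest map of Lie algebras with no central term; this cancellation is the reason the fermionic correction is included in $J_{\tau}$ with its particular coefficient, and it is the specifically chiral de Rham phenomenon absent from the purely bosonic theory of chiral differential operators. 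Once this is in hand, I would observe that each $\rho(\tau)$ preserves the conformal grading while raising the $x_{0}$-adic filtration, hence is topologically nilpotent on each conformal weight space; exponentiating yields automorphisms $\exp(\rho(\tau))$ whose restriction to the weight-zero subspace $\mathbb{C}[[x^{1}_{0},\dots,x^{d}_{0}]]$ recovers the geometric action of $G_{d}$ on $\Delta_{d}$, and the derivation property guarantees they preserve the vertex algebra structure. The same formulas and computations apply verbatim after completion, giving the asserted action on $\widehat{\Omega}^{ch}_{\mathbb{A}^{d}}$, $\widehat{\Theta}^{ch}_{\mathbb{A}^{d}}$ and $\widehat{\Omega}^{ch}_{dR,\mathbb{A}^{d}}$.
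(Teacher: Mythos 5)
The paper itself gives no argument for this theorem: its ``proof'' is the citation to \cite{MSV}. So your proposal has to be judged against the argument of that paper (and its textbook version in \cite{BZF}), and in outline you have reconstructed it correctly: the currents $J_{\tau}$ are (up to fermion-reordering signs) the right ones, the verification that $\rho$ is anomaly-free --- with the double-contraction terms of the $\beta\gamma$-system cancelling against those of the $bc$-system --- is exactly the crux, and it is indeed the feature that distinguishes the chiral de Rham complex from the purely bosonic theory of chiral differential operators, where this cocycle survives and obstructs globalization. The compatibility with $d^{ch}_{dR}$ is, as you say, a routine check.

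There is, however, a genuine gap in your last step, and it is the step that actually produces the group action. You claim that for $\tau\in L_{d}$ the operator $\rho(\tau)$ raises the $x_{0}$-adic filtration, hence is topologically nilpotent, hence exponentiates. This is false, and it fails already on the ``classical'' part of the algebra, before any vertex-algebra subtleties. Take $d=1$ and $\tau=x^{2}\partial_{x}\in L_{1}$, and recall that on the jet variables the action is by prolongation of the flow $\delta x(t)=x(t)^{2}$: then $\rho(\tau)(x_{2})=2x_{0}x_{2}+x_{1}^{2}$, and the term $x_{1}^{2}$ has $x_{0}$-adic degree $0$. Likewise $\rho(\tau)(\phi_{1})=2x_{0}\phi_{1}+2x_{1}\phi_{0}$, and on the conjugate variables one finds $\rho(\tau)(y_{1})=-2x_{0}y_{1}-2\phi_{0}\psi_{1}$, whose second term comes from your fermionic correction in $J_{\tau}$ with coefficient $\tau''(x_{0})=2$. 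The point is that for $\tau\in\mathfrak{m}^{2}\partial$ the \emph{second} derivatives $\partial^{2}\tau$ are units, and both the chain rule on jets and the $\phi\psi$-correction term (which you cannot drop, since it is what kills the anomaly) inject these units into adic degree $0$. So your stated reason for convergence of $\exp(\rho(\tau))$ evaporates. The conclusion is still true, but it needs a finer argument: one must show that on each fixed conformal weight space, modulo $\mathfrak{m}^{N}$, every application of $\rho(\tau)$ either raises adic degree or strictly improves some bounded auxiliary quantity (for instance the conformal weight carried by the $x$- and $\phi$-variables, or the number of $y$-variables), so that $\rho(\tau)^{n}$ raises the adic filtration once $n$ exceeds a bound depending on the weight and on $d$; alternatively one can do what \cite{MSV} actually do, namely bypass exponentiation entirely by writing down the action of a group element $g$ directly through normally ordered transformation formulas for the generating fields, and then verifying that these formulas preserve the OPEs (the same boson/fermion cancellation appears there, in the $\tilde{y}\tilde{y}$ OPE) and satisfy the composition law. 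As it stands, your proof establishes the Lie algebra action but not the asserted action of $G_{d}$.
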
 \begin{proof} Consult the original paper \cite{MSV}. \end{proof} As observed in \cite{MSV}, this theorem allows us to globalize the constructions from a disc $\Delta_{d}$ to any smooth $d$-dimensional variety via the method of \emph{Gelfand}-\emph{Kazhdan} \emph{formal geometry}. The resulting sheaves of vertex algebras will be denoted $\Omega^{ch}_{X}$ etc. There is an evident conformal grading on each of them, with $x_{i}$ of weight $i$ and so on.

 \subsection{Hodge Theory of Landau-Ginzburg Models} We will now briefly recall the basics of Hodge theory for a Landau-Ginzburg model. Note that according to work of numerous authors (we mention here A. Efimov, \cite{Efimov},and A. Preygel, \cite{Prey}), this is equivalent to the nc-Hodge theory of the matrix factorization category associated to $(X,f)$. A word of caution; there are more sophisticated notions of Hodge theory for LG-models that we will not be touching on in this note, as it seems unlikely that such things are obtainable from nc-Hodge theory. The interested reader is referred to the numerous brilliant works of Sabbah and Mochizuki on \emph{exponential mixed Hodge modules}.  \begin{definition} For an LG-model $(X,f)$ we define; \begin{itemize}\item  $\Theta_{f}:=\Big(\bigwedge^{-*}\Theta_{X},\iota_{df}\Big)$  where $\Theta_{X}$ denotes the sheaf of polyvector fields on $X$ and $\iota$ denotes contraction. $\Theta_{f}$ will be referred to as the sheaf of polyvectos on $(X,f)$.\item $\Omega_{f}:=\Big(\bigwedge^{+*}\Omega_{X},df\wedge\Big).$ This will be referred to as the sheaf of differential forms on $(X,f)$. \item $\Omega_{dR,f}:=\Big(\bigwedge^{+*}\Omega_{X},d_{dR}+df\wedge\Big).$ This will be referred to as the de Rham complex on $(X,f)$\end{itemize}\end{definition} \begin{remark} We note here that $\Theta_{f}$ is a (commutative) differential graded algebra with a module $\Omega_{f}$, and further that this module admits an extra differential. Note also that $\Omega_{f}$ \emph{is not} a differential graded algebra unlike in the case of vanishing potential. \end{remark}\begin{example}An instructive simple example is gotten by taking $(X,f)$ to be $(\mathbb{A}^{1},z^{d+1})$. Here $\Theta_{f}\cong\mathbb{C}[z]/z^{d}$, the sheaf of differential forms is isomorphic to $\big(z^{d}:\mathbb{C}[z]\longrightarrow\mathbb{C}[z]\big)$ and the natural Hodge- de Rham spectral sequence degenerates at the first page. Further, the euler characterstic of the de Rham complex is -$d$ which is -$1$ multiplied by the euler characterstic of the vanishing cycles euler characteristic of $f$, in agreement with the theorem of Sabbah (\cite{Sa})\end{example}

\begin{remark} In the sequel we will freely use the language of \emph{derived schemes} as it will simplify some proofs. This doesn't require anything like the full force of Derived Algebraic Geometry so should not cause the reader great difficulties. With this in mind let us now note that the sheaf of commutative differential algebras $\Theta_{f}$ is the sheaf of functions on the \emph{derived critical locus} of $f$, denoted here by $T^{*}_{df}[-1]X$. These objects have recieved a good deal of study recently owing to their status as local Darboux models for $-1$-symplectic varieties (cf citations). \end{remark}

\section{The Vertex Algebra associated to a Landau-Ginzburg Model}\subsection{BRST Reductions of Vertex Algebras}Iin this section we will introduce the basic objects of study of this note. They will be constructed from known objects via a general construction in the theory of vertex algebras, known as \emph{BRST reduction}. This is a procedure for modifying a given dgVA by deforming the differential. \\ \\ Now, let $a\in V$ be a vector, which we assume to be a cycle of cohomological degree $1$. \\ \\We see immediately from the relation $[T,a(z)]=\partial_{z}a(z)$ that $$[T,a_{-1}]=Res_{z=0}(\partial_{z}a(z))=0.$$ It follows further from \emph{associativity of the operator product expansion} (cf. \cite{Kac}) that $$[a_{-1},b(z)]=(a_{-1}b)(z)$$ for all vectors $b$. This is to say that $a_{-1}$ acts a derivation of $V$. We observe that this derivation has cohomoligical degree $0$ as $a$ was assumed to have cohomological degree $1$. Putting all of this together we deduce the following \begin{lemma} If $a\in V$ is as above and further if $a_{(-1)}^{2}=0$, then we have a dgVA with underlying dg- vector space $(V,\partial_{V}+a_{(-1)})$ and vacuum vector and fields unchanged from those of $V$. Further, if $V$ admits a conformal grading with $a\in V^{(1)}$, then the associated vertex algebra inherits this grading. \end{lemma}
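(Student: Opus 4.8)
The plan is to verify directly that the tuple $(V,\,\partial_V + a_{(-1)},\,T,\,\Omega,\,Y)$, retaining $T$, $\Omega$ and the state-field correspondence $Y$ verbatim from $V$, satisfies every axiom in the definition of a dgVA. Writing $D := \partial_V + a_{(-1)}$, I observe first that each axiom not referring to the differential---the vacuum field being the identity, the creation property $v(z)\Omega = v + \mathcal{O}(z)$, translation covariance $[T,v(z)] = \partial_z v(z)$, the relation $T\Omega = 0$, and mutual locality of all fields---holds automatically, since $\Omega$, $T$ and $Y$ are unchanged. The entire content therefore reduces to the four conditions that involve the differential: that $D$ is a cohomological differential, that $[T,D]=0$, that $D\Omega = 0$, and that $D$ is a derivation of all fields.

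First I would check $D^2 = 0$. Expanding, $D^2 = \partial_V^2 + [\partial_V, a_{(-1)}] + a_{(-1)}^2$, the middle term being the anticommutator since $\partial_V$ and $a_{(-1)}$ are both odd. Now $\partial_V^2 = 0$ as $\partial_V$ is a differential, and $a_{(-1)}^2 = 0$ by hypothesis, so the whole point is to kill the cross term. For this I would apply the coefficient-extraction $\mathrm{Res}_z$ to the derivation identity $[\partial_V, b(z)] = (\partial_V b)(z)$, obtaining $[\partial_V, b_{(-1)}] = (\partial_V b)_{(-1)}$ for every vector $b$; taking $b = a$ and using that $a$ is a $\partial_V$-cycle yields $[\partial_V, a_{(-1)}] = (\partial_V a)_{(-1)} = 0$. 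This cross-term vanishing is the one genuinely load-bearing computation, and it is precisely where the cycle hypothesis on $a$ is consumed.

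The remaining three conditions are then short. For $[T,D]=0$ I would write $[T,D] = [T,\partial_V] + [T,a_{(-1)}]$, the first bracket vanishing by the dgVA axioms for $V$ and the second by the residue computation $[T,a_{(-1)}] = \mathrm{Res}_z\,\partial_z a(z) = 0$ recorded before the lemma. For the vacuum I would invoke the creation axiom in the form $v(z)\Omega \in V[[z]]$, which in the present indexing reads $v_{(n)}\Omega = 0$ for all $n<0$; in particular $a_{(-1)}\Omega = 0$, and combined with $\partial_V\Omega = 0$ this gives $D\Omega = 0$. Finally, $D$ is a derivation of the fields by mere linearity: $[D,b(z)] = [\partial_V,b(z)] + [a_{(-1)},b(z)] = (\partial_V b)(z) + (a_{(-1)}b)(z) = (Db)(z)$, combining the derivation property of $\partial_V$ with the identity $[a_{(-1)},b(z)] = (a_{(-1)}b)(z)$ already deduced from associativity of the OPE.

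For the conformal statement I would keep the grading of $V$ unchanged and simply verify that $D$ has conformal weight $0$, so that each graded summand $V^{(q)}$ is a subcomplex. Since the grading is that of a conformally graded dgVA, $\partial_V$ is already weight-$0$, so it suffices to see that $a_{(-1)}$ is weight-$0$; by the conformal axiom a vector in $V^{(q)}$ has its mode $a_{(n)}$ of weight $q+n$, whence $a \in V^{(1)}$ forces $a_{(-1)}$ to weight $1 + (-1) = 0$, exactly as needed. The main hazard to flag---and the only place I would slow down---is that two distinct gradings operate simultaneously: $a_{(-1)}$ must \emph{raise cohomological degree by one} (being a mode of the odd vector $a$, and modes do not shift cohomological degree) so that $D$ is a bona fide odd differential, while \emph{carrying conformal weight zero} so that the grading descends. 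The nonstandard mode convention $a(z) = \sum_n a_{(n)}z^n$ makes these easy to conflate, so I would track the cohomological degree and the conformal weight as separate bookkeeping throughout.
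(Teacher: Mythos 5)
Your proof is correct and follows the same route as the paper: a direct verification of the dgVA axioms, using exactly the facts the paper establishes just before the lemma ($[T,a_{(-1)}]=\mathrm{Res}_{z}\,\partial_{z}a(z)=0$ and the derivation identity $[a_{(-1)},b(z)]=(a_{(-1)}b)(z)$ from OPE associativity). The paper's own proof is literally ``Nothing remains to be checked,'' so your write-up is simply a more complete version of the same argument; in particular your verification of the cross term $[\partial_{V},a_{(-1)}]=(\partial_{V}a)_{(-1)}=0$ and of $D\Omega=0$, together with the careful separation of cohomological degree from conformal weight, makes explicit precisely what the paper leaves implicit.
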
\begin{proof} Nothing remains to be checked. \end{proof} \begin{definition} Let $a\in V$ be as in the lemma above, then the vertex algebra so constructed is referred to as the \emph{BRST reduction} of $V$ by $a$. It is denoted $V^{BRST}_{a}$.\end{definition} \begin{example} Let $V=\Omega^{ch}_{\mathbb{A}^{d}}$. Further let $$a:=\sum_{j=1,...,d}y^{j}_{1}\phi^{j}_{0}\in\Omega^{ch}_{\mathbb{A}^{d}}.$$ This satisfies all the requisite conditions and thus we can perform BRST reduction. It is easy to see that we obtain  $V=\Omega^{ch}_{dR,\mathbb{A}^{d}}$.\end{example}
\subsection{The Construction} Let $(X,f)$ be our Landau-Ginzburg model. We'll construct a conformally graded dgVA $\Theta^{ch}_{f}$ with conformal weight $0$ subspace isomorphic to $\Theta_{f}$ via BRST reduction. Note, there is an element $df\in\Theta^{ch}_{f}$. In etale local coordinates we have  $$df=\sum_{j}\partial_{j}f(x^{1}_{0},...,x^{d}_{0})\phi^{j}_{1}.$$ Observe that this is of cohomological degree $+1$, as well as conformal degree $+1$. Finally we can check that we have $\Big(Res_{z=0}(df)(z)\Big)^2=0$. It follows that the pair $(\Theta^{ch}_{f},df)$ satisfies all the conditions needed to perform BRST reduction and we can thus define; \begin{definition} The conformally graded dgVA $\Theta^{ch}_{f}$ is by definition the BRST reduction $\Theta^{ch,BRST}_{X,df}.$ \end{definition} Unsurprisingly, there are analogues of $\Omega_{f}$ and $\Omega_{dR,(X,f)}$ as well. We package the following definitions into a trivial lemma; \begin{lemma} Let $\Omega^{ch}_{f}$ denote the dg- vector space, $(\Omega^{ch}_{X},\partial^{ch}_{f}:=(df)_{0})$, where we note that we are now taking the $0$-mode of $df\in\Omega^{ch}_{X}$. Then $\Omega^{ch}_{f}$ admits a natural structure of a conformally graded vertex module for the vertex algebra. Further, this module admits an extra differential $d^{ch}_{dR}$, which is simply to say that $[\partial^{ch}_{X},d^{ch}_{dR}]=0$. \end{lemma}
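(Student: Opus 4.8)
The plan is to prove two assertions, keeping $\Omega^{ch}_X$ fixed as the underlying vertex module and only varying the differentials. I would first pass to the etale-local model $\mathbb{A}^d$, where $\Theta^{ch}_{\mathbb{A}^d}=V_d\otimes\bigwedge^!_d$ and $\Omega^{ch}_{\mathbb{A}^d}=V_d\otimes\bigwedge_d$, and where $d^{ch}_{dR}$, the BRST differential $(df)_{(-1)}$ on the algebra and the candidate differential $(df)_{(0)}$ on the module are all explicit normally ordered expressions in the Heisenberg and Clifford modes. Since every operator is assembled from $f$ and $df=d^{ch}_{dR}f$, which are globally defined, and the whole construction is manifestly $G_d$-equivariant, the assertions descend from $\mathbb{A}^d$ to an arbitrary smooth $X$ by the Gelfand--Kazhdan formal geometry underlying the theorem of \cite{MSV} recalled above.

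For the module structure the only issue is compatibility of the deformed differentials, since $\Omega^{ch}_X$ is already a vertex module for $\Theta^{ch}_X$. Writing $Y^M$ for the action map, what must be checked is the graded identity
$$\big[(df)_{(0)},\,Y^M(a,z)\big]=Y^M\big((df)_{(-1)}a,\,z\big)$$
for all $a$, along with $(df)_{(0)}^2=0$. The nilpotency is the semi-infinite shadow of $df\wedge df=0$ and comes from the regularity of the operator product of $df$ with itself, i.e. the anticommutativity of the fermionic zero modes. The displayed identity is an instance of the Borcherds commutator formula for the module; conceptually it is the chiral lift of the classical fact that contraction of polyvectors against $df$ is a derivation with respect to wedging forms by $df$. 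I expect this to be the main obstacle. Note that the asymmetry in the two modes is forced: both differentials must have conformal degree $0$, yet $df$ sits in weight $1$ in $\bigwedge^!_d$ (where it furnishes contraction) but in weight $0$ in $\bigwedge_d$ (where it furnishes wedging), so the algebra uses the residue mode $(df)_{(-1)}$ while the module uses the zero mode $(df)_{(0)}$. Consequently the identity relates two genuinely different modes of ``$df$'' living in two distinct Clifford vertex algebras, so it is not a formal consequence of the residue being a derivation and must be extracted from the single-contraction operator product of the polyvector and form currents, with care taken over the fermionic signs.

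For the extra differential the argument is clean and sidesteps any such computation. Because $\Omega^{ch}_X$ is the chiral de Rham complex with its differential deleted, $d^{ch}_{dR}$ is by construction a derivation of the vertex algebra $\Omega^{ch}_X$ (this is exactly what allows it to be adjoined to form $\Omega^{ch}_{dR,X}$), so $[d^{ch}_{dR},a(z)]=(d^{ch}_{dR}a)(z)$ for all $a$. Taking $a=f=f(x_0)$, a weight-zero state, and reading off the coefficient of $z^0$ gives the operator identity
$$\partial^{ch}_f=(df)_{(0)}=\big[d^{ch}_{dR},\,f_{(0)}\big],$$
the exact lift of the classical $df\wedge=[d_{dR},f]$. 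Since $f_{(0)}$ is even and $(d^{ch}_{dR})^2=0$, the latter being inherited from the fact that $\Omega^{ch}_{dR,\mathbb{A}^d}$ is a dgVA, the graded Jacobi identity yields
$$\big\{d^{ch}_{dR},\,\partial^{ch}_f\big\}=\big\{d^{ch}_{dR},\,[d^{ch}_{dR},f_{(0)}]\big\}=\big[(d^{ch}_{dR})^2,\,f_{(0)}\big]=0,$$
which is precisely the claimed relation $[\partial^{ch}_f,d^{ch}_{dR}]=0$. The same computation shows that $\partial^{ch}_f+d^{ch}_{dR}$ squares to zero, which is what makes the chiral enhancement of $\Omega_{dR,f}=(\bigwedge^{+*}\Omega_X,\,d_{dR}+df\wedge)$ into a complex.

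The conformal grading then requires no further work: each operator is a sum of modes of definite conformal weight, and as $(df)_{(0)}$ is the zero mode of a weight-zero state it has conformal degree $0$ and preserves the grading, so $\Omega^{ch}_f$ is conformally graded and its weight-zero piece recovers the classical $\Omega_f$.
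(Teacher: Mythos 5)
Your proposal is correct, and in spirit it is a fleshed-out version of the paper's proof, which consists in its entirety of the sentence ``This is a simple computation in local coordinates'': both arguments reduce to the etale-local model $\mathbb{A}^{d}$ and descend by $G_{d}$-equivariance, exactly as you set up. Where you genuinely depart from (and improve on) the brute-force check the paper intends is the second assertion: writing $\partial^{ch}_{f}=(df)_{(0)}=[d^{ch}_{dR},f_{(0)}]$ --- which uses only the state identity $d^{ch}_{dR}f=df$ in $\Omega^{ch}_{X}$ and the derivation property $[d^{ch}_{dR},a(z)]=(d^{ch}_{dR}a)(z)$ already recorded in the paper --- and then invoking the graded Jacobi identity $\{d^{ch}_{dR},[d^{ch}_{dR},f_{(0)}]\}=[(d^{ch}_{dR})^{2},f_{(0)}]=0$ replaces a mode computation by a two-line formal argument, and makes it transparent that $\partial^{ch}_{f}+d^{ch}_{dR}$ squares to zero, i.e.\ that $\Omega^{ch}_{dR,f}$ is a complex. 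Your diagnosis of the mode asymmetry (weight $1$ in $\Theta^{ch}$ forcing the residue mode $(df)_{(-1)}$, weight $0$ in $\Omega^{ch}$ forcing the zero mode $(df)_{(0)}$) is exactly the content of the paper's definitions and is worth making explicit. The one place you stop short is the module compatibility $[(df)_{(0)},Y^{M}(a,z)]=Y^{M}((df)_{(-1)}a,z)$, which you correctly isolate as the essential check but leave as a sketch; I would add that it is more formal than you suggest. Once one verifies the single operator identity $(df)^{\Omega}_{(0)}=\mathrm{Res}_{z}\,Y^{M}(df,z)$ on $\Omega^{ch}_{X}$ --- in coordinates both sides are the conformal-degree-zero part of the normally ordered current $\sum_{j}{:}\partial_{j}f(x)\phi^{j}{:}$, since the $x$- and $\phi$-modes entering it supercommute --- the displayed compatibility is precisely the standard commutator formula for the residue of a module field, with no further OPE analysis or sign bookkeeping needed. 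That operator identification is presumably the paper's ``simple computation in local coordinates''; with it supplied, your proof is complete.
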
 \begin{proof}This is a simple computation in local coordinates.\end{proof} Of course, $(\Theta^{ch}_{f},\Omega^{ch}_{f},\Omega^{ch}_{dR,f})$ is the desired semi-infinite enhancement of the nc-Hodge theory associated to $(X,f)$. We mention here that during the writing of this note we learned from V. Gorbounov of the lovely paper \cite{Go} in which a similar construction is studied (in more detail) in a particular case. \begin{remark} Observe that $\Theta^{ch}_{f}$ has a holomorphic (negative modes all act trivially) sub-algebra, locally generated by the $x$ and $\psi$ variables. This sub-algebra has a very elegant description as the algebra of functions on the space of arcs into the derived scheme $T^{*}_{df}X[-1]$. In fact, if we adopt the language of \emph{chiral differential operators} $\mathcal{D}^{ch}$ (cf. \cite{MSV}), we should be able to (make sense of and) prove an isomorphism $\mathcal{D}^{ch}_{T^{*}_{df}[-1]X}\cong\Theta^{ch}_{f}.$ This heuristic, together with a result of Malikov and Schechtman, \cite{MS}, suggests that we should be able to identify a suitably nice category of representations of this vertex algebra. \end{remark} \subsection{Conformal Vertex Modules Associated to the Landau-Ginzburg Model} Recall that $\Theta^{ch}_{f}$ admits a conformal grading, we wish to study the category $\mathbb{Z}_{\geq 0}$-modules wrt this conformal grading, note that we will also refer to the grading on the module as conformal. The answer is remarkably simple as we shall see below. \begin{lemma} Letting $\mathcal{M}_{+ve}(\Theta^{ch}_{f})$ denote the category of conformally graded modules for the vertex algebra $\Theta^{ch}_{f}$ and $\mathcal{D}^{l}(T^{*}_{df}[-1]X)$ the category of left $D$-modules on the derived critical locus. Then there is a naturally defined equivalence of categories $$\mathcal{M}_{+ve}(\Theta^{ch}_{f})\longrightarrow\mathcal{D}^{l}(T^{*}_{df}[-1]X).$$\end{lemma}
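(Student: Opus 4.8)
The plan is to factor the equivalence through the potential-zero case, where it reduces to a graded/super version of the theorem of Malikov and Schechtman \cite{MS}, and then to transport the BRST deformation on the vertex side to the Koszul differential cutting out the derived critical locus on the $D$-module side. I would work first on $\mathbb{A}^{d}$ and glue using the $G_{d}$-action supplied by the theorem of \cite{MSV} quoted above, since both the vertex algebra $\Theta^{ch}_{f}$ and the category of $D$-modules on $T^{*}_{df}[-1]X$ descend through the same Gelfand--Kazhdan formal geometry. The forward functor will be $M\mapsto M^{(0)}$, the passage to the lowest conformal piece; this is manifestly natural, so the main work is local.

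First consider $f=0$, where $\Theta^{ch}_{\mathbb{A}^{d}}=V_{d}\otimes\bigwedge^{!}_{d}$ is the algebra of chiral differential operators on the graded-smooth space $Z:=T^{*}[-1]\mathbb{A}^{d}$, whose conformal weight $0$ part is exactly $\mathcal{O}(Z)=\bigwedge^{-*}\Theta_{\mathbb{A}^{d}}$, with even generators $x_{0}$ and odd generators $\psi_{0}$. For a conformally graded $M=\bigoplus_{q\geq 0}M^{(q)}$, the content is that the weight $0$ operators close into the algebra of differential operators $\mathcal{D}_{Z}$: multiplication by $x_{0},\psi_{0}$ gives the $\mathcal{O}(Z)$-module structure on $M^{(0)}$, while the dual Heisenberg and Clifford modes $y_{0},\phi_{0}$ act as the derivations $\partial_{x_{0}},\partial_{\psi_{0}}$, the only nontrivial brackets being $[y_{0},x_{0}]=1$ and $[\phi_{0},\psi_{0}]=1$ read off from the defining relations of $\mathcal{H}$ and $Cl$. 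All creation modes $x_{>0},y_{>0},\phi_{>0},\psi_{>0}$ raise conformal weight and hence leave $M^{(0)}$ invariant only after being killed, so the weight $0$ action is precisely through $\mathcal{D}_{Z}$. This identifies $M^{(0)}$ as a left $\mathcal{D}_{Z}$-module and is the computation underlying \cite{MS}.

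The inverse functor is induction. Because the holomorphic subalgebra is the algebra of functions on the arc space of $Z$, a polynomial algebra freely generated over $\mathcal{O}(Z)$ by the positive-weight variables, and because the creation modes supercommute (as $[y_{i},x_{j}]=\delta_{i+j,0}$ vanishes when $i,j>0$), a positive-energy module is freely generated from its weight $0$ part. Given a $\mathcal{D}_{Z}$-module $N$ one sets $\mathrm{Ind}(N)$ to be the induced vertex module, graded so that $\mathrm{Ind}(N)^{(0)}=N$; the unit is then tautological, and the counit $\mathrm{Ind}(M^{(0)})\to M$ is an isomorphism by a PBW/freeness argument, establishing the equivalence for $f=0$. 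To pass to general $f$, recall that $\Theta^{ch}_{f}$ is the BRST reduction of $\Theta^{ch}_{\mathbb{A}^{d}}$ by $df=\sum_{j}\partial_{j}f(x_{0})\phi^{j}_{1}$, i.e.\ the same graded vertex algebra with differential augmented by the induced weight $0$ derivation. Under the dictionary above this derivation restricts on lowest-weight pieces to contraction $\iota_{df}$, which is exactly the Koszul differential cutting $Y=T^{*}_{df}[-1]X$ out of $Z$. Hence the $f=0$ equivalence intertwines the two added differentials, and passing to the resulting dg-categories yields $\mathcal{M}_{+ve}(\Theta^{ch}_{f})\simeq\mathcal{D}^{l}(Y)$; the $G_{d}$-equivariance of $M\mapsto M^{(0)}$ then globalizes the statement from $\mathbb{A}^{d}$ to $X$.

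The hard part, I expect, is making the last two moves rigorous in the derived, quasi-smooth setting. One must fix a workable definition of $\mathcal{D}^{l}(T^{*}_{df}[-1]X)$ for a derived scheme and verify that the BRST differential matches the Koszul differential not merely on the weight $0$ piece but compatibly with the induction in all higher conformal weights, so that the Malikov--Schechtman equivalence upgrades to a map of dg-categories once both deformations are switched on. The freeness/PBW input to the counit also needs care in the simultaneous presence of the internal cohomological differential and the conformal grading, and one should confirm that the $G_{d}$-descent is compatible with the derived structure so that the globalized statement is genuinely well-posed.
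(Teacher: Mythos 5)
Your outline follows the same large-scale moves as the paper (work locally, treat the zero-potential case first, construct the inverse by induction, then observe that the BRST differential matches the Koszul/contraction differential and glue), but it has a genuine gap at exactly the point the paper identifies as its main input. You assert that ``a positive-energy module is freely generated from its weight $0$ part'' because the creation modes supercommute and the holomorphic subalgebra is a polynomial algebra. That is not an argument: commutativity of the creation operators among themselves says nothing about whether they generate the module from its bottom, and this generation statement is precisely what must be proven for your counit $\mathrm{Ind}(M^{(0)})\to M$ to be surjective. The paper proves it via \emph{Kashiwara's lemma}: given $m$ of conformal weight $q$, one restricts to the finitely many modes $\{x_{i},y_{i},\phi_{i},\psi_{i}\}_{i\in[-q,q]\setminus\{0\}}$, which pair up into Weyl/Clifford algebras (e.g.\ $[y_{i},x_{-i}]=1$); positivity of the conformal grading forces the negative modes to act locally nilpotently, and Kashiwara's lemma then yields finitely many torsion elements (singular vectors) whose span under the positive modes contains $m$. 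The mechanism is thus the interplay between positive and negative modes together with boundedness below of the grading --- not any property of the creation operators alone --- and your sketch never touches it; you flag that ``the freeness/PBW input to the counit needs care'' but supply no idea for how to carry it out.

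There is a second, related defect: your forward functor is $M\mapsto M^{(0)}$, whereas the paper's is the space of singular vectors $\mathcal{M}^{!}=\{m\in\mathcal{M}: x_{i}m=y_{i}m=\phi_{i}m=\psi_{i}m=0,\ \forall i<0\}$. These do not agree on the category as stated: singular vectors need not sit in conformal weight $0$. For instance, a direct sum of induced modules whose bottom pieces are placed in different conformal weights is a perfectly good conformally graded module, but its weight-$0$ subspace sees only one summand, so the counit out of $\mathrm{Ind}(M^{(0)})$ is not surjective and your functor discards information. The paper's surjectivity argument explicitly produces singular generators of various positive weights, which is why $\mathcal{M}^{!}$, rather than the lowest graded piece, is the correct object to induct from. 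Your reduction of the $f\neq 0$ case to $f=0$ and the Gelfand--Kazhdan globalization are consistent with what the paper does, so repairing the local argument --- replacing $M^{(0)}$ by $\mathcal{M}^{!}$ and proving generation via Kashiwara's lemma --- would bring your proof in line with the paper's.
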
\begin{proof} We break the proof into simple steps, the main input is essentially just Kashiwara's Lemma. \begin{enumerate} \item Here we construct the functor which will realize the equivalence in the case of $(X,f)=(\mathbb{A}^{1},0)$. Now a module, $\mathcal{M}$, for the vertex algebra $\Theta^{ch}_{\mathbb{A}^{1}}$ admits an action by the infinite dimensional Lie algebra generated by the elements $\{x_{i},y_{i},\phi_{i},\psi_{i}\}_{i\in\mathbb{Z}}$ with the evident commutation relations. (Abusing notation slightly, we think of these as modes of elements of the vertex algebra, so for example $y_{0}$ is identified with the residue of $y_{1}\in\Theta^{ch}_{\mathbb{A}^{1}}$.) \\ \\In particular the elements $\{x_{0},y_{0},\phi_{0},\psi_{0}\}$ generate the algebra $Diff(T^{*}[-1]\mathbb{A}^{1})$ of differential operators on the odd cotangent bundle of $\mathbb{A}^{1}$. We define the functor, $$(-)^{!}:\mathcal{M}_{+ve}(\Theta^{ch}_{\mathbb{A}^{1}})\longrightarrow\mathcal{D}^{l}(T^{*}[-1]\mathbb{A}^{1}),$$ by taking the singular vectors with respect to the negative modes. That is to say $$\mathcal{M}^{!}:=\{m\in\mathcal{M}:(x_{i},y_{i},\phi_{i},\psi_{i})m=0,\forall i<0\}.$$  Negative modes commute with $0$-modes so $\mathcal{M}^{!}$ is indeed a module for differential operators on $T^{*}[-1]\mathbb{A}^{1}$. \item We can now observe compatibility with taking products of varieties as well as etale gluings. This allows us to globalize the construction to a functor to $$(-)^{!}:\mathcal{M}_{+ve}(\Theta^{ch}_{X})\longrightarrow\mathcal{D}^{l}(T^{*}[-1]X).$$ Consideration of the differentials shows that this in fact induces a functor on the twists of both sides, ie $$(-)^{!}:\mathcal{M}_{+ve}(\Theta^{ch}_{f})\longrightarrow\mathcal{D}^{l}(T^{*}_{df}[-1]X).$$\item We show that the functor constructed in (1) above is an equivalence by explicitly constructing the inverse as an induced module. For a $D$-module $\mathcal{N}$ on $T^{*}[-1]\mathbb{A}^{1}$, write $\iota(\mathcal{N}):=\mathcal{N}[x_{i},y_{i},\phi_{i},\psi_{i}]_{i>0}$. Observe that this is naturally a vertex module over $\Theta^{ch}_{\mathbb{A}^{1}}$. Further, for $\mathcal{M}\in\mathcal{M}_{+ve}(\Theta^{ch}_{\mathbb{A}^{1}})$, there is a natural map $$\epsilon:\iota(\mathcal{M}^{!})\longrightarrow\mathcal{M}.$$ Injectivity is obvious enough, just as in Kashiwara's lemma. For surjectivity, let us take $m\in\mathcal{M}$ and assume it is of conformal weight $q$.\\ \\ Consider now $\epsilon$ to be a map of modules for $\mathbb{C}[x_{i},y_{i},\phi_{i},\psi_{i}]_{i\in[-q,q]\setminus\{0\}}$. Kashiwara's lemma implies we can find finitely many $(x_{i},y_{i},\phi_{i},\psi_{i})_{i\in[-q,0)}$-torsion elements of $\mathcal{M}$ whose span under the action of $\mathbb{C}[x_{i},y_{i},\phi_{i},\psi_{i}]_{i\in(0,q]}$ contains $m$. These elements must all manifestly be of conformal weight at most $q$. As such they are necessarily killed by all elements of conformal weight less than $-q$, and thus they are singular vectors for all negative modes, and we have proven surjectivity of $\epsilon$. \item Using the gluing construction of the functor $(-)^{!}$ and the fact that it is compatible with differentials on both sides in the presence of a potential $f$, we conclude that the functor induces the desired equivalnce.\end{enumerate}\end{proof}\begin{remark}\begin{itemize}\item The bounded below derived category of $\mathcal{D}^{l}(T^{*}_{df}[-1]X)$ is equivalent to the bounded below derived category of $D$-modules on $X$ with topological support in $\mathbf{crit}(f)$, which by definition is equivalent to $D$-modules on $\mathbf{crit(f)}$. \item Note that we can define weak equivalences of vertex modules in such a way that they are reflected by the functor $^{!}$. We take the localization with respect to these and denote the resulting category $d\mathcal{M}_{+ve}(\Theta^{ch}_{f})$.\end{itemize}\end{remark} The above remarks allow us to deduce the following theorem, \begin{tcolorbox}\begin{theorem} There is a naturally defined equivalence of dg-categories $$\mathcal{D}\mathcal{M}_{+ve}(\Theta^{ch}_{f})\longrightarrow\mathcal{D}^{l}(\mathbf{crit}(f)).$$ In particular if $f$ has an isolated singularity then $\mathcal{M}_{+ve}(\Theta^{ch}_{f})$ is just the category of vector spaces. \end{theorem}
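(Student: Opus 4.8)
\end{tcolorbox}
\begin{proof}
The plan is to obtain the statement as the derived enhancement of the preceding Lemma, assembled with the two observations recorded in the Remark; the genuine content is concentrated in a single step, which I isolate below. Note first that both sides already carry differentials: on the module side the BRST/potential twist making $\Theta^{ch}_{f}$ a dgVA, and on the other side the Koszul differential $\iota_{df}$ defining the derived critical locus $T^{*}_{df}[-1]X$. Each is thus a strict dg-category, and the Lemma is an equivalence $(-)^{!}:\mathcal{M}_{+ve}(\Theta^{ch}_{f})\xrightarrow{\sim}\mathcal{D}^{l}(T^{*}_{df}[-1]X)$ of these strict categories. Following the second bullet of the Remark, I would declare a morphism of conformally graded modules to be a weak equivalence precisely when $(-)^{!}$ sends it to a quasi-isomorphism of $D$-modules. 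Since $(-)^{!}$ is already an equivalence intertwining the two differentials — the compatibility verified in the proof of the Lemma — it preserves and reflects weak equivalences tautologically, and localizing both categories at their quasi-isomorphisms upgrades the Lemma to a dg-equivalence
\[
\mathcal{D}\mathcal{M}_{+ve}(\Theta^{ch}_{f})\;\xrightarrow{\sim}\;D^{+}\!\big(\mathcal{D}^{l}(T^{*}_{df}[-1]X)\big),
\]
the bounded-below derived category of $D$-modules on the derived critical locus. This part is formal.

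The substantive step, and the one I expect to be the main obstacle, is the identification of the right-hand side with $\mathcal{D}^{l}(\mathbf{crit}(f))$, i.e. the first bullet of the Remark. Here I would use that $T^{*}_{df}[-1]X$ is by construction $\mathrm{Spec}_{X}$ of the Koszul-type complex $\Theta_{f}=\big(\bigwedge^{-*}\Theta_{X},\iota_{df}\big)$: a $D$-module on this derived scheme is a $D$-module on $X$ together with an action of the odd contraction operators and the differential $\iota_{df}$. Passing to the derived category and computing the cohomology of this Koszul differential concentrates the topological support on the zero locus of $df$, which is exactly $\mathbf{crit}(f)$. A dévissage along this support then identifies the derived category of such objects with the bounded-below derived category of $D$-modules on $X$ set-theoretically supported in $\mathbf{crit}(f)$, and Kashiwara's theorem finally identifies the latter with $\mathcal{D}^{l}(\mathbf{crit}(f))$. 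Making rigorous the theory of $D$-modules on the derived scheme $T^{*}_{df}[-1]X$, and controlling the spectral sequence of the Koszul differential, is the delicate point; composing this equivalence with the one above yields the first assertion.

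For the final clause, suppose $f$ has an isolated singularity. Then the partial derivatives $\partial_{1}f,\dots,\partial_{d}f$ form a regular sequence, so the Koszul complex $\Theta_{f}$ is acyclic outside degree zero and the derived critical locus coincides with the classical Jacobian scheme, supported on the finite set $\mathbf{crit}(f)$. No derived correction is then needed and the equivalence already holds at the level of the underlying abelian categories. Since $\mathbf{crit}(f)$ is a finite set of points, Kashiwara's lemma exhibits every $D$-module supported there as a finite direct sum of delta-modules with endomorphism algebra $\mathbb{C}$; hence $\mathcal{M}_{+ve}(\Theta^{ch}_{f})$ is (a finite product of copies of) the category of vector spaces, as claimed.
\end{proof}
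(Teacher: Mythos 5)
Your proposal is correct and takes essentially the same route as the paper: the paper deduces the theorem directly from the preceding Lemma combined with the two bullets of its Remark, namely localizing $\mathcal{M}_{+ve}(\Theta^{ch}_{f})$ at the weak equivalences reflected by $(-)^{!}$, and identifying the bounded-below derived category of $\mathcal{D}^{l}(T^{*}_{df}[-1]X)$ with $D$-modules topologically supported on $\mathbf{crit}(f)$, hence with $\mathcal{D}^{l}(\mathbf{crit}(f))$ by Kashiwara. Your write-up merely makes explicit the Koszul/support dévissage and the isolated-singularity specialization that the paper leaves implicit.
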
\end{tcolorbox}

\subsection{Finiteness Properties} In the presence of an interesting potential $f$, we cannot of course expect the variety $X$ to be proper. Properness of $X$ can be understood as a finiteness property for its category of perfect complexes, and taking this non-commutative point of view one obtains the correct notion of properness for a Landau-Ginzburg model. Such a definition oughtto be equivalent to homological properness of $\mathbf{MF}(X,f)$ in the sense of Kontsevich, \cite{Ko}. \begin{definition}We say that $(X,f)$ is proper if the critical locus $\mathbf{crit}(f)$ is proper. \end{definition} We would like to know that in this case the cohomology of the associated vertex algebra is finite as well, at least in the conformally graded sense. We prove this below.

\begin{tcolorbox}\begin{theorem} Let $(X,f)$ be a proper Landau-Ginzburg model. Then for every fixed conformal weight $j$, the weight $j$ component of the total hypercohomology of the sheaf of vertex algebras, $\Theta^{ch}_{f}$, is finite dimensional. That is to say for all $j$ we have $$\dim_{\mathbb{C}}\mathbb{H}^{*}\Big(X,\Theta^{ch,(j)}_{f}\Big)<\infty.$$\end{theorem}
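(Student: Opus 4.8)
The plan is to treat the sheaf of vertex algebras one conformal weight at a time and to reduce the assertion to the classical fact that a coherent sheaf with proper support has finite-dimensional cohomology. Fix $j$. I would first record that $\Theta^{ch,(j)}_f$ is a \emph{bounded complex of coherent} $\mathcal{O}_X$-modules. Indeed, the positive conformal modes $x^k_i,y^k_i,\phi^k_i,\psi^k_i$ with $i>0$ each raise the conformal weight by $i$, so only finitely many monomials in these generators have total weight $\leq j$; under the Gelfand--Kazhdan globalization these finite-dimensional spaces of oscillator monomials assemble into finite-rank vector bundles on $X$, and $\Theta^{ch,(j)}_f$ is obtained by tensoring the weight-$0$ complex $\Theta_f=\big(\bigwedge^{-*}\Theta_X,\iota_{df}\big)$ with such bundles. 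Since the only fermions available at weight $j$ are the finitely many $\psi^k_0$ together with positive-weight fermions of total weight at most $j$, the cohomological degree is bounded, so $\Theta^{ch,(j)}_f$ is a bounded complex of finite-rank locally free sheaves equipped with the weight-preserving BRST differential, and its cohomology sheaves are coherent.

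The key step is to show that the cohomology sheaves $\mathcal{H}^*\big(\Theta^{ch,(j)}_f\big)$ are set-theoretically supported on $\mathbf{crit}(f)$. The cleanest route avoids any explicit mode computation. In the author's indexing the commutative product on the weight-$0$ subalgebra is given by the $0$-modes, and for a cycle $a\in\Theta^{ch,(0)}_f=\Theta_f$ the operator $a_{(0)}$ has conformal degree $0$ and commutes with the differential; hence on passing to cohomology each $\mathcal{H}^*\big(\Theta^{ch,(j)}_f\big)$ becomes a \emph{unital} module over the sheaf of graded rings $\mathcal{H}^*(\Theta_f)$, with the class of the vacuum acting as the identity. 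Now $\Theta_f$ is the Koszul complex of the section $df$ of $\Omega^1_X$, so over the open set $U=X\setminus\mathbf{crit}(f)$, where $df$ is nowhere vanishing, $\Theta_f|_U$ is acyclic; therefore $\mathcal{H}^*(\Theta_f)$ is supported on $\mathbf{crit}(f)$. A unital module over a sheaf of rings supported on $\mathbf{crit}(f)$ is itself supported there, since at a point off $\mathbf{crit}(f)$ the unit of the ring vanishes and kills the whole module stalk. This gives $\mathrm{supp}\,\mathcal{H}^*\big(\Theta^{ch,(j)}_f\big)\subseteq\mathbf{crit}(f)$.

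To finish I would invoke the standard finiteness result that a coherent $\mathcal{O}_X$-module supported on a proper closed subscheme has finite-dimensional cohomology. Each $\mathcal{H}^q\big(\Theta^{ch,(j)}_f\big)$ is coherent and, by the previous step, supported on $\mathbf{crit}(f)$, which is proper by hypothesis; being annihilated by a power of the ideal of $\mathbf{crit}(f)$ it is pushed forward from a proper thickening, so $H^p\big(X,\mathcal{H}^q(\Theta^{ch,(j)}_f)\big)$ is finite-dimensional for every $p,q$, and vanishes for $p>\dim X$ or for $q$ outside the bounded range found above. The hypercohomology spectral sequence
$$E_2^{p,q}=H^p\big(X,\mathcal{H}^q(\Theta^{ch,(j)}_f)\big)\ \Longrightarrow\ \mathbb{H}^{p+q}\big(X,\Theta^{ch,(j)}_f\big)$$
then has finitely many nonzero terms, all finite-dimensional, which yields $\dim_{\mathbb{C}}\mathbb{H}^*\big(X,\Theta^{ch,(j)}_f\big)<\infty$.

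The main obstacle is the bookkeeping of the first step together with the claim, used in the second, that the weight-$0$ action really does pass to a unital module structure on cohomology compatibly with the conformal grading. Both reduce to understanding the BRST differential $(df)_{(-1)}$ and the $0$-mode products in the nonstandard mode convention; once one checks that $a_{(0)}$ is conformal-degree-preserving for $a\in\Theta^{ch,(0)}_f$ and that the vacuum class acts as the identity, the geometric input is only the acyclicity of a Koszul complex off its zero locus and coherent finiteness over a proper base, both entirely standard.
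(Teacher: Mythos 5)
Your argument breaks at its first step, and everything after depends on it. You assert that $\Theta^{ch,(j)}_{f}$ is a bounded complex of finite-rank locally free $\mathcal{O}_{X}$-modules, obtained by tensoring the weight-$0$ complex $\Theta_{f}$ with bundles of oscillator monomials assembled by Gelfand--Kazhdan globalization. This is precisely what fails for chiral de Rham-type sheaves. The Gelfand--Kazhdan gluing maps are automorphisms of vertex algebras, and these are \emph{not} semilinear for the naive, coordinate-dependent multiplication by functions of $x_{0}$: the coordinate-invariant substitute is the action of the modes $\big(h(x_{0})\big)_{(0)}$ (normal-ordered multiplication), which differs from naive multiplication by correction terms involving annihilation modes $\partial/\partial y_{i}$, and which is moreover not a ring action, since $\big(gh\big)_{(0)}=\sum_{k}g_{(k)}h_{(-k)}$ contains nonzero cross terms with $k\neq 0$. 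Consequently the conformal-weight components of $\Theta^{ch}_{X}$, hence of $\Theta^{ch}_{f}$, are not quasi-coherent $\mathcal{O}_{X}$-modules at all; they are only sheaves of $\mathbb{C}$-vector spaces carrying a canonical filtration whose \emph{associated graded} pieces are quasi-coherent. This is exactly why the paper, following \cite{MSV}, introduces the filtration $\mathcal{F}$: it checks compatibility with coordinate changes and with the BRST differential, identifies $Gr^{0}\Theta^{ch}_{f}\cong\mathcal{O}_{T^{*}_{df}[-1]X}$ and $Gr^{j}\Theta^{ch}_{f}\cong\pi^{*}\mathcal{V}_{j}$ as perfect sheaves on the derived critical locus, and concludes via the spectral sequence of the filtered complex together with finiteness of hypercohomology of perfect sheaves on a derived scheme whose classical truncation $\mathbf{crit}(f)$ is proper. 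Your step one silently discards exactly the correction terms this filtration exists to control. (A secondary inaccuracy: even locally, $\partial^{ch}_{f}$ is not of the form (Koszul differential) $\otimes$ (identity on an oscillator bundle); it has cross terms sending $y$-variables to $\phi$-variables weighted by second derivatives of $f$, so the claimed tensor decomposition only holds after passing to the associated graded.)

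Your second step, by contrast, contains a sound and genuinely nice idea, and it does not need the delicate ``unital module over $\mathcal{H}^{*}(\Theta_{f})$'' gloss. Off $\mathbf{crit}(f)$ one can locally choose a vector field $\xi$ with $\iota_{df}\xi=1$, so that $\Omega=\partial^{ch}_{f}\xi$; since $\Omega_{(0)}=\mathrm{id}$ and $[\partial^{ch}_{f},\xi_{(0)}]=(\partial^{ch}_{f}\xi)_{(0)}$ by the derivation axiom, the identity of $\Theta^{ch,(j)}_{f}\big|_{X\setminus\mathbf{crit}(f)}$ is null-homotopic, and all cohomology sheaves vanish there --- as sheaves of vector spaces, with no ring structure required. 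Combined with the paper's filtration, this observation plus classical Serre finiteness for coherent sheaves supported on the proper scheme $\mathbf{crit}(f)$ would give an alternative ending to the proof, avoiding the induction over the Postnikov tower of $T^{*}_{df}[-1]X$. But without the filtration --- that is, without repairing your first step --- there is no coherence statement to feed into Serre's theorem, and the proposal as written does not prove the theorem.
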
\end{tcolorbox}\begin{proof} We will introduce an increasing filtration $\mathcal{F}$ of the sheaf $\Theta^{ch,(j)}_{f}$. It is a mild modification of the filtration introduced in \cite{MSV}. We define it first for $\mathbb{A}^{1}$. Recall from above the meaning of the symbols $\{x,\psi,\phi,y\}$ so that we can make the identification of vector spaces $$\Theta^{ch}_{\mathbb{A}^{1}}=\mathbb{C}[x_{i},y_{i+1},\psi_{i},\phi_{i+1}]_{i\geq 0}.$$ We now stipulate the following, $$\{x_{0},\psi_{0}\}<x_{1}<x_{2}<...<\psi_{1}<\psi_{2}<...<\phi_{1}<\phi_{2}<...y_{1}<y_{2}<...$$ We now extend this to all of $\Theta^{ch}_{\mathbb{A}^{1}}$ lexicographically. This is of course not exhaustive but is easily seen to be exhaustive on each fixed conformal weight component. Now observe the following simple facts, \begin{itemize}\item The filtration $\mathcal{F}$ is compatible with the vertex algebra structure on $\Theta^{ch}_{\mathbb{A}^{1}}.$\item The $0$-th associated graded is the algebra of polyvectors on $\mathbb{A}^{1}$. \item For all $j>0$, $Gr^{j}_{\mathcal{F}}\Theta^{ch}_{\mathbb{A}^{1}}$ is a perfect module over $Gr^{0}_{\mathcal{F}}\Theta^{ch}_{\mathbb{A}^{1}}.$\end{itemize} As in \cite{MSV} we observe compatibility with coordinate transformations. Thus, we produce a filtration $\mathcal{F}_{X}$ on each $\Theta^{ch}_{X}$. Again, as in \cite{MSV} we note that the associated graded $Gr^{>0}_{\mathcal{F}_{X}}\Theta^{ch}_{X}$ is a quasi-coherent sheaf on the scheme $X$. Reducing to the case of $X=\mathbb{A}^{1}$ we see that, for all $j>0$,  $Gr^{j}_{\mathcal{F}_{X}}\Theta^{ch}_{X}$ is a direct sum of locally free sheaves in various cohomological degrees. We denote the resulting perfect complex on $X$ by $\mathcal{V}_{j}$.  \\ \\ Note that so far we have not mentioned the potential $f$. As before, let $\partial^{ch}_{f}$ denote the differential on $\Theta^{ch}_{f}$. Observe that  $\partial^{ch}_{f}$ takes $\psi$ variables to $x$ ones and $y$ ones to $\phi$ ones. Further it is of conformal weight $0$. These two facts together imply that it gives a well-defined filtration on  $\Theta^{ch}_{f}$. Observe that $$Gr^{0}_{\mathcal{F}_{X}}\Theta^{ch}_{X}\cong\mathcal{O}_{T^{*}[-1]X},$$ the sheaf of functions on the -$1$-shifted cotangent bundle. Examining the differential, we see that it acts as contraction by $df$ on $\mathcal{O}_{T^{*}[-1]X}$, so that $$Gr^{0}_{\mathcal{F}_{f}}(\Theta^{ch}_{f})\cong\mathcal{O}_{T^{*}_{df}[-1]X}.$$ Crucially we also note that for all $j>0$,  $Gr^{j}_{\mathcal{F}_{X}}\Theta^{ch}_{X}$ is isomorphic (as a sheaf over $T^{*}_{df}[-1]X$) to $\pi^{*}\mathcal{V}_{j}$, where $$\pi: T^{*}_{df}[-1]X\longrightarrow X$$ is the natural map. In particular each $Gr^{j}\Theta^{ch}_{f}$ is a perfect sheaf on the derived critical locus $T^{*}_{df}[-1]X$. \\ \\ In order to complete the proof, it of course suffices (by the spectral sequence associated to a filtered complex) to prove that $$\dim_{\mathbb{C}}\mathbb{H}^{*}\Big(X,Gr\Theta^{ch,(j)}_{f}\Big)<\infty.$$ With this in mind we need only recall the following basic fact from derived algebraic geometry; \begin{itemize}\item Suppose $\mathfrak{X}$ is a derived scheme with proper classical truncation $\mathfrak{X}^{cl}$, and suppose further that the total cohomology sheaf $\bigoplus\mathcal{H}^{-j}(\mathcal{O}_{\mathfrak{X}})[j]$ is coherent over $\mathfrak{X}^{cl}$. Then for any perfect sheaf $\mathfrak{F}$ on $\mathfrak{X}$, the total cohomology $\mathbb{H}^{*}(\mathfrak{X},\mathfrak{F})$ is finite dimensional.\end{itemize} In order to prove this one can use induction on the Postnikov tower of $\mathfrak{X}$, noting that the base case is a classical theorem of Serre. The condition that $\mathbf{crit}(f)$ is proper allows us to set $\mathfrak{X}:=T^{*}_{df}[-1]X$ and $\mathfrak{F}:= Gr^{j}\Theta^{ch}_{f},$ we deduce immediately from the lemma the finiteness of $$\dim_{\mathbb{C}}\mathbb{H}^{*}\Big(X,Gr\Theta^{ch,(j)}_{f}\Big),$$ and thence the theorem.\end{proof} We observe now the following - \begin{corollary} For a proper LG-model $(X,f)$, the total hypercohomology in each fixed conformal weight of $\Omega^{ch}_{f}$ is also finite. In particular we can produce a well-defined $q$-graded euler characteristic, defined as follows; \begin{tcolorbox}$$\chi^{ch}_{van}(f):=\sum\chi\Big(\mathbb{H}^{*}(X,\Omega^{ch,(j)}_{f})\Big)q^{j}\in\mathbb{Z}[[q]].$$\end{tcolorbox}\end{corollary}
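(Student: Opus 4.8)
The plan is to rerun the argument of the finiteness theorem above, transporting its conclusion from $\Theta^{ch}_{f}$ to the module $\Omega^{ch}_{f}$ by exhibiting each conformal weight component as a perfect sheaf on the derived critical locus $T^{*}_{df}[-1]X$. First I would equip $\Omega^{ch}_{f}$ with the exact analogue of the filtration $\mathcal{F}$ used above. Since $\Omega^{ch}_{\mathbb{A}^{1}}$ differs from $\Theta^{ch}_{\mathbb{A}^{1}}$ only in the odd zero-mode sector, carrying $\phi_{0}$ in place of $\psi_{0}$ while the positive-mode generators $x_{\geq 1},y_{\geq 1},\phi_{\geq 1},\psi_{\geq 1}$ are identical, I would declare $\{x_{0},\phi_{0}\}$ to be minimal and extend lexicographically precisely as before. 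The three itemised facts then hold verbatim: the filtration is compatible with the vertex-module structure and with coordinate transformations, so it globalises to a filtration $\mathcal{F}_{X}$ on $\Omega^{ch}_{X}$, and for $j>0$ each $Gr^{j}_{\mathcal{F}_{X}}\Omega^{ch}_{X}$ is a perfect complex of locally free sheaves on $X$. The weight-zero piece is now $Gr^{0}_{\mathcal{F}_{X}}\Omega^{ch}_{X}\cong\wedge^{+*}\Omega_{X}$, on which $\partial^{ch}_{f}=(df)_{0}$ acts as $df\wedge$, so that $Gr^{0}_{\mathcal{F}_{f}}\Omega^{ch}_{f}\cong\Omega_{f}$.

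The genuinely new input is to see that each graded piece is perfect over $\mathcal{O}_{T^{*}_{df}[-1]X}=Gr^{0}_{\mathcal{F}_{f}}\Theta^{ch}_{f}$. For the weight-zero piece I would invoke the fibrewise duality of the exterior algebra: contraction against the top form furnishes an isomorphism of $\mathcal{O}_{X}$-modules $\wedge^{k}\Omega_{X}\cong\wedge^{d-k}\Theta_{X}\otimes\omega_{X}$ under which wedging by $df$ is intertwined with contraction $\iota_{df}$. This yields, up to shift, an isomorphism of complexes $\Omega_{f}\cong\Theta_{f}\otimes_{\mathcal{O}_{X}}\omega_{X}\cong\pi^{*}\omega_{X}$, hence an invertible, in particular perfect, sheaf on the derived critical locus; and since the positive-mode sectors of the two chiral algebras coincide, more generally $Gr^{j}_{\mathcal{F}_{f}}\Omega^{ch}_{f}\cong\pi^{*}(\mathcal{V}_{j}\otimes\omega_{X})$ over $T^{*}_{df}[-1]X$, where $\mathcal{V}_{j}$ is the perfect complex on $X$ produced in the theorem and $\pi\colon T^{*}_{df}[-1]X\to X$ is the projection. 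As twisting by the line bundle $\omega_{X}$ and pulling back along $\pi$ both preserve perfectness, each $Gr^{j}_{\mathcal{F}_{f}}\Omega^{ch}_{f}$ is perfect. I expect this duality to be the main obstacle: one must check that the identification intertwines not merely the differentials but also the $\Theta_{f}$-action and the full chiral structure at every conformal weight, so that the isomorphism is genuinely one of $\mathcal{O}_{T^{*}_{df}[-1]X}$-modules and not just a coincidence of underlying complexes. By contrast the filtration compatibilities are routine given \cite{MSV} and the theorem above.

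With perfectness established the remainder is identical to the theorem. By the spectral sequence of the filtration it suffices to bound $\dim_{\mathbb{C}}\mathbb{H}^{*}(X,Gr\,\Omega^{ch,(j)}_{f})$, and properness of $(X,f)$ makes $\mathfrak{X}=T^{*}_{df}[-1]X$ a derived scheme with proper classical truncation $\mathbf{crit}(f)$ and coherent total cohomology sheaf. Applying the derived-algebraic-geometry finiteness lemma recalled above with $\mathfrak{F}=Gr^{j}_{\mathcal{F}_{f}}\Omega^{ch}_{f}$ gives finiteness of the weight-$j$ hypercohomology of $\Omega^{ch}_{f}$. Finiteness in each weight then makes $\chi(\mathbb{H}^{*}(X,\Omega^{ch,(j)}_{f}))$ a well-defined integer for every $j$, and gathering these as the coefficients of $q^{j}$ produces the series $\chi^{ch}_{van}(f)\in\mathbb{Z}[[q]]$; no convergence question arises, since the grading is by non-negative conformal weight.
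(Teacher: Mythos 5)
Your proposal is correct and is essentially the argument the paper intends: the corollary is stated with no separate proof precisely because the filtration argument of the preceding finiteness theorem carries over to $\Omega^{ch}_{f}$, and you rerun exactly that argument. The one genuinely new ingredient you supply --- the contraction-against-the-volume-form identification $\Omega_{f}\cong\pi^{*}\omega_{X}$ (up to shift), which exhibits each $Gr^{j}_{\mathcal{F}_{f}}\Omega^{ch,(j)}_{f}$ as a perfect sheaf on $T^{*}_{df}[-1]X$ rather than merely a complex on the non-proper $X$ --- is exactly the detail the paper leaves implicit, and it is the right way to fill it.
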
 \begin{example} Let us take $(X,f)=(\mathbb{A}^{1},z^{d+1})$. We can then consider the associated \emph{bi}-\emph{graded} complex $\Omega^{ch}_{f}$  by considering the standard $\mathbb{G}_{m}$-action on $\mathbb{A}^{1}$. Computing the bi-graded euler characteristic is then easy, note the we write $z$ for the extra grading variable. As is standard in the literature on $q$-series we write $$\theta_{q}(z)=\prod_{n\geq0}(1-q^{n}z)(1-q^{n+1}z^{-1}).$$ We then compute the bigraded euler characteristic to be simply $$\theta_{q}\big(-z^{-d}\big)\theta_{q}(z)^{-1}=-z^{-d}\theta_{q}(z^{d})\theta_{q}(z)^{-1}.$$ Note that in the $q\longrightarrow0$ limit this produces -$z^{-d}(1+z^{1}+...+z^{d-1})$, which is consistent with $\{z^{d-1}dz,...,dz\}$ forming a basis of the first twisted de Rham cohomology group coupled with the vanishing of all other twisted de Rham cohomology groups.\end{example}

\section{Some Vague Speculation} The formalism of Hochschild (co)-homology produces, for a dg-category $\mathbf{C}$, a commutative differential graded algebra $H^{*}(\mathbf{C})$, of \emph{Hochschild cochains}, with a distinguished module $H_{*}(\mathbf{C })$ of \emph{Hochschild chains}, which module admits an additional circle action. We'll assume familiarity with this formalism. We believe that under suitable conditions on the category $\mathbf{C}$, it should be possible to produce an enhancement of this structure to a semi-infinite such. \\ \\ We will state this as a conjecture, although we will be very imprecise. \begin{conjecture} Under suitable conditions on the dg-category dgVA,  $H^{*}_{ch}(\mathbf{C})$ with a graded module  $H^{ch}_{*}(\mathbf{C})$, which admits an extra $S^{1}$-action. Further it holds that; \begin{enumerate} \item The conformal weight $0$ limit reproduces the usual Hochschild package.\item This reproduces the usual chiral de Rham package when inputted $\mathbf{Perf}(X)$.\item It reproduces the above constructed chiral de Rham package of the LG-model $(X,f)$ in the case of $\mathbf{MF}(X,f)$.\item For a proper dg-category the associated cohomology groups are finite in the conformally graded sense. \item The inclusion of the $0$-weight subspace of the derived $S^{1}$-invariants, $$H_{*}(\mathbf{C})^{S^{1}}\longrightarrow H_ {*}(\mathbf{C})^{ch,S^{1}},$$ is a weak equivalence.\end{enumerate}\end{conjecture}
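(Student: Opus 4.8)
The plan is to realize both the classical Hochschild package and its chiral enhancement as invariants of a single \emph{loop} object attached to $\mathbf{C}$, and to let the conformal grading be the weight grading for loop rotation. The guiding principle is the derived-loop-space description of Hochschild homology: for $\mathbf{C}=\mathbf{Perf}(X)$ one has $H_{*}(\mathbf{C})\cong\mathcal{O}(\mathcal{L}X)$, where $\mathcal{L}X=\mathrm{Map}(S^{1},X)$ is the derived loop space, with the circle action recovering the Connes operator $B$ and hence the de Rham differential. From this vantage point the passage from the ordinary package to the chiral one should be the passage from the \emph{infinitesimal} (or unipotent) loop space to the full \emph{algebraic} loop space $X(\mathbb{C}((t)))$, exactly as in the construction of Kapranov and Vasserot \cite{KV}; the $\mathbb{G}_{m}$-action rotating the formal loop provides the conformal grading, and its weight-$0$ part is the constant loops, i.e. the classical invariant. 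Thus the first, structural, step is to define $H^{ch}_{*}(\mathbf{C})$ and $H^{*}_{ch}(\mathbf{C})$ as suitable \emph{factorization} (chiral) homology of the formal disc with coefficients in a factorization algebra manufactured out of the cyclic bar construction of $\mathbf{C}$.

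Concretely, I would replace the cyclic tensor powers $\mathbf{C}^{\otimes n}$ appearing in the Hochschild complex by their configuration-space refinements over $\mathbb{A}^{1}$, so that the simplicial structure of the bar construction is glued along the Ran space of $\mathbb{A}^{1}$. By the dictionary of Beilinson and Drinfeld, a translation-equivariant factorization algebra on $\mathbb{A}^{1}$ is the same datum as a vertex algebra, and this is precisely what should produce the dgVA structure on $H^{*}_{ch}(\mathbf{C})$ together with its chiral module $H^{ch}_{*}(\mathbf{C})$. The residual circle symmetry of the cyclic bar construction survives as the chiral analogue of $B$, namely $d^{ch}_{dR}$, furnishing the required $S^{1}$-action. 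This is the step where the vertex-algebra axioms, and in particular locality, must be extracted from the factorization structure; I expect this to be routine in the presence of the factorization formalism but to require real care for a general, merely abstract, dg-category.

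To verify properties (1)--(3) I would argue by reduction to local models. Property (1) is immediate from the construction: the weight-$0$ part of factorization homology over the disc is the value on constant loops, which is the ordinary cyclic bar complex, so the weight-$0$ limit recovers $H$ with its usual $S^{1}$-structure. For (2) and (3) the strategy is the chiral Hochschild--Kostant--Rosenberg computation: for $\mathbf{C}=\mathbf{Perf}(\mathbb{A}^{d})$ one computes the chiral Hochschild complex directly and matches it, mode by mode, with the $bc$-$\beta\gamma$ system underlying $\Omega^{ch}_{\mathbb{A}^{d}}$, and then globalizes by Gelfand--Kazhdan formal geometry exactly as in the theorem of \cite{MSV} recalled above. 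For $\mathbf{MF}(X,f)$ the curved structure supplied by $f$ should contribute, after the same local computation, precisely the degree-$1$ conformal-weight-$1$ element $df$ whose mode deforms the differential, thereby reproducing the BRST reduction defining $\Theta^{ch}_{f}$. Property (4) then follows from the finiteness theorem already established here: via (3) and a d\'evissage along the generators of a proper category one reduces to the geometric case, where properness of $\mathbf{crit}(f)$ gives the conformally graded finiteness. Property (5) is a consequence of (1): since the $S^{1}$-action preserves conformal weight, derived invariants are computed weightwise, and in weight $0$ they reduce to the classical $S^{1}$-invariants, so the comparison map is a weak equivalence there.

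The hard part will be the very first step --- giving an intrinsic, coordinate-free chiral enhancement functor on \emph{abstract} dg-categories, rather than on geometric ones where formal geometry is available. One must ensure the factorization/chiral bar construction is invariant under Morita equivalence and descends correctly, and that the resulting object genuinely satisfies the locality axiom of a vertex algebra; pinning down the precise finiteness and smoothness hypotheses on $\mathbf{C}$ under which all of this holds is exactly the content hidden in the phrase ``suitable conditions'', and is where the real work lies.
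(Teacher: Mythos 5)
The statement you set out to prove is stated in the paper as a conjecture, deliberately left imprecise, and the paper itself offers no proof: its only supporting evidence is the remark that item (5) holds for $\mathbf{Perf}(X)$ by the results of \cite{MSV}, together with a single worked example, $\mathbf{C}=\mathrm{Rep}(\mathfrak{g})$, where the enhancement is constructed by hand as the BRST reduction $\Theta^{ch,BRST}_{V^{*},\omega_{\mathfrak{g}}}$ with $\omega_{\mathfrak{g}}$ built from the structure constants, and where (5) is checked via a PBW-filtration reduction to the abelian case. Your proposal is of the same nature --- a program, not a proof --- and as a program it is sensible: the loop-space/factorization picture you invoke (Kapranov--Vasserot \cite{KV}, Beilinson--Drinfeld) is a plausible and arguably more principled route than the paper's example-by-example BRST constructions. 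But nothing in your text actually discharges the conjecture: the first step, manufacturing a factorization algebra from the cyclic bar construction of an \emph{abstract} dg-category and extracting the locality axiom, is precisely the content of the conjecture, and you defer it, as you yourself concede.

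Beyond that, two of your deductions contain genuine errors. First, (5) does not follow from (1). Your argument shows only that the weight-$0$ piece of the derived $S^{1}$-invariants of the chiral object agrees with the classical invariants; the content of (5) is that the inclusion of this weight-$0$ piece into the \emph{full} derived invariants is a weak equivalence, i.e.\ that every positive conformal weight component becomes acyclic after passing to derived $S^{1}$-invariants. In the case of $\mathbf{Perf}(X)$ this is exactly the theorem of \cite{MSV} that chiral de Rham cohomology collapses onto ordinary de Rham cohomology --- a computation (easy for $\mathbb{A}^{1}$, then globalized), not a formal consequence of the $S^{1}$-action preserving the grading; likewise the paper's verification of (5) for $\mathrm{Rep}(\mathfrak{g})$ requires an actual filtration argument. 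Second, your proof of (4) by d\'evissage ``along the generators of a proper category'' to the geometric case cannot work as stated: a general proper dg-category need not be of the form $\mathbf{Perf}(X)$ or $\mathbf{MF}(X,f)$, so the finiteness theorem proven in the paper for proper LG-models yields (4) only in the situations covered by (2) and (3), not in the generality the conjecture demands.
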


\begin{remark} We should remark that in the case of $\mathbf{Perf}$, (5) is proven in the original paper \cite{MSV}, where it reduces to a very easy statement in the case of $\mathbb{A}^{1}$. It is easy to deduce (5) in the case of $\mathbf{MF}(X,f)$ from this. \end{remark}\subsection{Further Examples} We include here a brief sketch of how this can be done when $\mathbf{C}$ is the category of representations of a finite dimensional Lie algebra $\mathfrak{g}$.\\ \\ We denote the underlying vector space of $\mathfrak{g}$ by $V$. Further we choose a basis $\{x^{1},...,x^{d}\}$ of V, with $c^{k}_{ij}$ denoting the structure constants with respect to this basis, so by definition we have $[x^{i}x{j}]=c^{k}_{ij}x^{k}$, note that we will be assuming the summation convention in this subsection. The Hochschild cohomology of $U\mathfrak{g}$ is computed as Lie algebra cohomology with coefficients in the module $(U\mathfrak{g})^{ad}$. This can equivalently be described as Lie algebra cohomology of $\mathfrak{g}$ with coefficients in the module $S(\mathfrak{g}^{ad})$. We can restate this as the following lemma; \begin{lemma} The Hochschild cohomology of $U\mathfrak{g}$ is equivalent as a cdga to $$\Big(\mathbb{C}[x^{1},...,x^{d},\psi^{1},...,\psi^{d}],\partial:=c^{k}_{ij}x^{k}\psi^{j}\partial_{x^{i}}-\frac{1}{2}c^{i}_{jk}\psi^{j}\psi^{k}\partial_{\psi^{i}}\Big),$$ where the $x$ variables are of cohomological degree $0$ and the $\psi$ ones of cohomological degree $1$.\end{lemma}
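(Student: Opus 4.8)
The plan is to deduce the lemma from two classical facts and then to read off the differential in coordinates. First I would recall the standard computation of the Hochschild cohomology of an enveloping algebra: taking the Chevalley--Eilenberg (Koszul) resolution of the trivial module $\mathbb{C}$ and inducing it up to a $U\mathfrak{g}$-bimodule resolution of $U\mathfrak{g}$ itself, one obtains a canonical isomorphism $HH^{*}(U\mathfrak{g})\cong H^{*}_{\mathrm{Lie}}(\mathfrak{g},(U\mathfrak{g})^{ad})$, where $(U\mathfrak{g})^{ad}$ denotes $U\mathfrak{g}$ with the adjoint action $x\cdot a=xa-ax$. This isomorphism is multiplicative, the cup product on the left corresponding to the product induced by the algebra structure of $U\mathfrak{g}$ together with the wedge on $\bigwedge^{\bullet}\mathfrak{g}^{*}$. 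Next I would invoke the Poincar\'e--Birkhoff--Witt symmetrization map $\beta\colon S\mathfrak{g}\to U\mathfrak{g}$, which is an isomorphism of $\mathfrak{g}$-modules for the two adjoint actions; this gives $(U\mathfrak{g})^{ad}\cong S(\mathfrak{g}^{ad})$ and hence an identification of the coefficients appearing in the Lie algebra cohomology.

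With the coefficients replaced by the commutative algebra $S\mathfrak{g}$, the Chevalley--Eilenberg cochain complex becomes a genuine cdga, and I would write it out explicitly. Identifying $S\mathfrak{g}=\mathbb{C}[x^{1},\dots,x^{d}]$ with the $x^{i}$ in cohomological degree $0$ and $\bigwedge^{\bullet}\mathfrak{g}^{*}=\mathbb{C}[\psi^{1},\dots,\psi^{d}]$ with the dual basis $\psi^{i}$ in degree $1$, the underlying graded algebra is exactly $\mathbb{C}[x^{1},\dots,x^{d},\psi^{1},\dots,\psi^{d}]$. The Chevalley--Eilenberg differential splits as a sum of the coefficient term $\sum_{i}\psi^{i}\rho(x_{i})$, where $\rho(x_{i})=c^{k}_{ij}x^{k}\partial_{x^{j}}$ is the adjoint action written as a derivation of $S\mathfrak{g}$, and the form term $-\tfrac{1}{2}c^{i}_{jk}\psi^{j}\psi^{k}\partial_{\psi^{i}}$ dual to the bracket. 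Assembling these recovers the stated operator $\partial$, the relative sign of the first term being fixed by the conventions for the action and for the duality $\psi^{i}(x_{j})=\delta^{i}_{j}$ together with the antisymmetry $c^{k}_{ij}=-c^{k}_{ji}$. That $\partial$ is a derivation of the graded-commutative product and that $\partial^{2}=0$ is a direct check, the latter being equivalent to the Jacobi identity for the $c^{k}_{ij}$.

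The main obstacle is the assertion \emph{as a cdga}: I must verify that the evident commutative product of $\mathbb{C}[x,\psi]$ really does compute the cup product on $HH^{*}(U\mathfrak{g})$, even though $\beta$ is only a morphism of modules and not of algebras (it intertwines the commutative product on $S\mathfrak{g}$ with the noncommutative product on $U\mathfrak{g}$ only modulo terms of lower Poincar\'e--Birkhoff--Witt order). The route I would take is to filter the DG algebra $\bigwedge^{\bullet}\mathfrak{g}^{*}\otimes(U\mathfrak{g})^{ad}$ by the PBW filtration on the second factor. The adjoint action preserves PBW order, so the differential is filtered; the associated graded DG algebra is precisely $\bigwedge^{\bullet}\mathfrak{g}^{*}\otimes S\mathfrak{g}$ with symbol differential equal to the $\partial$ computed above and with the symmetric product. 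Since the cup product on Hochschild cohomology is graded commutative by Gerstenhaber's theorem, the resulting spectral sequence identifies the product on $HH^{*}(U\mathfrak{g})$ with the commutative product of the cdga $(\mathbb{C}[x,\psi],\partial)$. I would close by noting that no Duflo-type correction is needed at this level; such corrections intervene only when one pushes further to compare this cohomology with polyvector fields or with the centre of $U\mathfrak{g}$.
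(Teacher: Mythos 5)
Your first two paragraphs reproduce what is, in effect, the paper's entire argument: the paper simply records that $HH^{*}(U\mathfrak{g})$ is Lie algebra cohomology with coefficients in $(U\mathfrak{g})^{ad}$, replaces the coefficients by $S(\mathfrak{g}^{ad})$ via PBW symmetrization, and writes out the Chevalley--Eilenberg complex in coordinates; the lemma is stated there as a restatement of these facts, with no further proof. Up to that point your proposal matches the intended route.

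The problem is in your third paragraph, i.e.\ precisely in the multiplicative claim that you correctly single out as the main obstacle. The multiplicative spectral sequence of the PBW filtration identifies (granting degeneration, which does hold here since symmetrization is a filtered isomorphism of complexes) the \emph{associated graded} algebra $\mathrm{gr}\,HH^{*}(U\mathfrak{g})$ with $H^{*}\bigl(\mathbb{C}[x,\psi],\partial\bigr)$; it does not identify $HH^{*}(U\mathfrak{g})$ itself with that cohomology as an algebra. A filtered commutative algebra need not be isomorphic to its associated graded (compare $\mathbb{C}[x,y]/(y^{2}-x^{3}-1)$, filtered by degree, with $\mathbb{C}[x,y]/(y^{2}-x^{3})$), and Gerstenhaber commutativity of the cup product does nothing to close this gap. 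Moreover, your closing assertion that ``no Duflo-type correction is needed at this level'' is self-undermining: in cohomological degree $0$ the lemma asserts an algebra isomorphism $Z(U\mathfrak{g})=HH^{0}(U\mathfrak{g})\cong (S\mathfrak{g})^{\mathfrak{g}}=H^{0}\bigl(\mathbb{C}[x,\psi],\partial\bigr)$, which \emph{is} the comparison with the centre of $U\mathfrak{g}$ that you defer, and which is exactly Duflo's theorem --- the PBW symmetrization is an isomorphism of $\mathfrak{g}$-modules but does not respect products on invariants, and the correction by the Duflo series is what repairs this. The extension to all cohomological degrees, which is what the full cdga statement requires, is Kontsevich's conjecture proven by Pevzner--Torossian and Calaque--Rossi. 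So the multiplicative statement is true, but it is a deep theorem that your filtration argument does not prove; as written, the last step of your proposal fails.
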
 We'll construct $H^{*}_{ch}(U\mathfrak{g})$ as a BRST reduction of the vertex algebra of chiral polyvector fields on the affine space dual to the underlying vector space of $\mathfrak{g}$, $\Theta_{\mathbb{A}^{1}}^{ch}$. Considering the above lemma, it is pretty obvious how to do this. \begin{tcolorbox}\begin{definition}Define the complex $H^{*}_{ch}(U\mathfrak{g})$ as the BRST reduction $\Theta^{ch,BRST}_{V^{*},\omega_{\mathfrak{g}}}$. Here $$\omega_{\mathfrak{g}}:=c^{k}_{ij}x^{k}_{0}\psi^{j}_{0}y^{i}_{1}-\frac{1}{2}c^{i}_{jk}\psi^{j}_{0}\psi^{k}_{0}\phi^{i}_{1},$$ which we note satisfies all the requirements needed to perform the BRST reduction. \end{definition}\end{tcolorbox}

The reader can check that similar formulaes define a module, $H^{ch}_{*}(U\mathfrak{g})$, for $H^{*}_{ch}(U\mathfrak{g})$. The definition of the $S^{1}$ action is unchanged from the case of trivial Lie bracket.\begin{theorem} ($H^{*}_{ch}(U\mathfrak{g})$,$H^{ch}_{*}(U\mathfrak{g}))$ satisfies the conditions of the conjecture. \end{theorem}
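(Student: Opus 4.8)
The plan is to verify the conditions of the conjecture for $\mathbf{C}=\mathrm{Rep}(\mathfrak{g})$ individually. Conditions (1), (4) and (5) are the substantive ones here; condition (2) is recovered in the abelian degeneration, where $c^{k}_{ij}=0$ forces $\omega_{\mathfrak{g}}=0$ and $H^{*}_{ch}(U\mathfrak{g})$ collapses on the nose to $\Theta^{ch}_{\mathbb{A}^{d}}$ for $\mathbb{A}^{d}=V^{*}$, which is the chiral polyvector package already at hand; and condition (3) is not applicable, as $\mathrm{Rep}(\mathfrak{g})$ is not a category of matrix factorizations in general. So I would concentrate on (1), (4) and (5).

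For (1), I would first record that the conformal weight $0$ subspace of $\Theta^{ch}_{\mathbb{A}^{d}}$ is the classical polyvector algebra $\mathbb{C}[x^{i}_{0},\psi^{i}_{0}]$, whose underlying algebra is exactly that of the preceding lemma. Since $\omega_{\mathfrak{g}}\in V^{(1)}$, the BRST mode $(\omega_{\mathfrak{g}})_{(-1)}$ is of conformal degree $0$ and preserves this subspace, so the only remaining task is to evaluate it on the generators $x^{k}_{0}$ and $\psi^{k}_{0}$ by a direct mode calculation using $[y_{i},x_{j}]=\delta_{i+j,0}$ and $[\psi_{i},\phi_{j}]=\delta_{i+j,0}$. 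The two summands of $\omega_{\mathfrak{g}}$ are engineered precisely so that this returns the derivation $\partial=c^{k}_{ij}x^{k}\psi^{j}\partial_{x^{i}}-\frac{1}{2}c^{i}_{jk}\psi^{j}\psi^{k}\partial_{\psi^{i}}$ of the lemma, whence (1). The parallel computation for the module $H^{ch}_{*}(U\mathfrak{g})$ identifies its weight $0$ part with the Hochschild chains equipped with the Connes (de Rham) differential, which is the chains half of the same statement.

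For (5), I would use that the $S^{1}$-action on $H^{ch}_{*}(U\mathfrak{g})$ is by construction unchanged from the trivial-bracket case, and that the BRST differential $(\omega_{\mathfrak{g}})_{(-1)}$ is $S^{1}$-equivariant and of conformal degree $0$. Hence forming derived $S^{1}$-invariants commutes with turning on $\omega_{\mathfrak{g}}$, and the comparison map $H_{*}(U\mathfrak{g})^{S^{1}}\to H_{*}(U\mathfrak{g})^{ch,S^{1}}$ is obtained from the corresponding map for $\Theta^{ch}_{\mathbb{A}^{d}}$ by deformation. The trivial-bracket statement is the $\mathbf{Perf}(\mathbb{A}^{d})$ case established in \cite{MSV}; running the spectral sequence of the BRST filtration — exactly as the remark deduces (5) for $\mathbf{MF}(X,f)$ from $\mathbf{Perf}$ — then promotes the weight $0$ inclusion to a weak equivalence, giving (5).

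For (4), I would reproduce the finiteness theorem of the previous subsection. Transport the MSV-type filtration $\mathcal{F}$ from $\Theta^{ch}_{\mathbb{A}^{d}}$ to $H^{*}_{ch}(U\mathfrak{g})$: its weight $0$ associated graded is $\mathcal{O}$ of the derived scheme $\mathfrak{X}$ cut out by the weight-$0$ differential (the derived coadjoint quotient of $\mathfrak{g}^{*}$, i.e.\ $\mathbb{R}\mathrm{Spec}$ of the Hochschild-cochain cdga of the lemma), and for $j>0$ the graded piece is the pullback along $\pi:\mathfrak{X}\to\mathbb{A}^{d}=\mathfrak{g}^{*}$ of a perfect complex $\mathcal{V}_{j}$. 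Declaring $\mathrm{Rep}(\mathfrak{g})$ \emph{proper} to mean that the classical truncation $\mathfrak{X}^{cl}$ — the adjoint quotient $\mathfrak{g}^{*}/\!/G$ — is proper, one applies the same derived-algebraic-geometry finiteness lemma (Postnikov induction down to Serre's theorem) to conclude that each $\mathbb{H}^{*}(\mathfrak{X},\mathcal{V}_{j})$, and hence each conformal graded piece, is finite dimensional. The main obstacle is concentrated here: one must correctly identify $\mathfrak{X}$, match the categorical notion of properness of $\mathrm{Rep}(\mathfrak{g})$ with properness of $\mathfrak{X}^{cl}$, and — the genuinely nontrivial input — verify the coherence hypothesis of the lemma, namely that $HH^{*}(U\mathfrak{g})=H^{*}(\mathfrak{g},S\mathfrak{g})$ is finitely generated over its degree $0$ part. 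Everything else transcribes the arguments already given.
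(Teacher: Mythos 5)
Your handling of the conditions that actually carry content matches the paper. For (1)--(3) the checks are exactly the routine ones the paper dismisses with ``there is very little to check,'' and your argument for (5) is, in substance, the paper's own: the paper proves (5) by endowing everything with the evident PBW filtrations and reducing to the case of abelian $\mathfrak{g}$, i.e.\ to the trivial-bracket case settled in \cite{MSV}; your ``BRST filtration'' whose associated graded kills $\omega_{\mathfrak{g}}$ is the same device under another name (though your intermediate claim that taking derived $S^{1}$-invariants ``commutes with turning on $\omega_{\mathfrak{g}}$'' is not a literal commutation --- it is only justified via the spectral sequence of that filtration, which is what both you and the paper ultimately lean on).

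Where you diverge is (4), and there you have manufactured an obstacle that is not in the theorem. Condition (4) is conditional: it asks for finiteness only \emph{for a proper dg-category}, where properness is the homological properness of Kontsevich already invoked earlier in the paper (Hom-complexes with finite-dimensional total cohomology). $\mathrm{Rep}(\mathfrak{g})$, i.e.\ perfect $U\mathfrak{g}$-modules, is never proper in this sense for $\mathfrak{g}\neq 0$, since $\mathrm{End}(U\mathfrak{g})=U\mathfrak{g}$ is infinite dimensional; so (4) holds vacuously and the paper rightly says nothing about it. Your substitute definition --- properness of $\mathfrak{g}^{*}/\!/G$ --- is not the notion the conjecture refers to (and, being an affine quotient, it is itself essentially never proper). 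Moreover the transcription of the finiteness theorem would not go through as you sketch it: the cdga computing $HH^{*}(U\mathfrak{g})$ is concentrated in \emph{positive} cohomological degrees, so its $\mathbb{R}\mathrm{Spec}$ is a (formal) quotient stack rather than a derived scheme with connective structure sheaf, and the paper's finiteness lemma (Postnikov induction terminating in Serre's theorem) does not apply to it. Thus the step you flag as ``the genuinely nontrivial input'' --- coherence of $H^{*}(\mathfrak{g},S\mathfrak{g})$ over its degree-$0$ part --- is both unproven in your proposal and unnecessary for the theorem. Deleting your discussion of (4) and recording its vacuity leaves a proof that agrees with the paper's.
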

\begin{proof} There is very little to check. We mention only that (5) can be proven by reducing to the case of an abelian $\mathfrak{g}$ after endowing the objects in question with the evident PBW fitrations. \end{proof}

\begin{remark} We expect such enhancements to exist in in the case of perfect complexes on an orbifold, further such objects oughtto be related to the orbifold chiral de Rham complex of Frenkel and Szczesny, \cite{FS}. We wish to return to these questions in the future.\end{remark}

\end{document}